\theoremstyle{definition}
\newtheorem{theorem}{Theorem}[section]
\newtheorem{definition}[theorem]{Definition}
\newtheorem{corollary}[theorem]{Corollary}
\newtheorem{proposition}[theorem]{Proposition}
\newtheorem{remark}[theorem]{Remark}
\newtheorem{example}[theorem]{Example}
\newtheorem{lemma}[theorem]{Lemma}
\newtheorem{problem}[theorem]{Problem}
\newcommand{\N}{\mathbb{N}}
\newcommand{\F}{\mathbb{F}_2}
\newcommand{\W}{\mathcal{W}}
\newcommand{\LL}{\mathcal{F}_2}
\newcommand{\A}{\mathcal{A}_2}
\renewcommand{\AA}{\widetilde{\mathcal{A}}_2}
\newcommand{\WW}{\F\langle \W \rangle}
\newcommand{\Sq}{Sq}
\renewcommand{\P}{\mathcal{P}}
\begin{document}

\title{The mod $2$ dual Steenrod algebra as a subalgebra of the mod 2 dual Leibniz-Hopf algebra}

%\titlerunning{The mod $2$ dual Steenrod algebra}
%\date{Received: date / Accepted: date}
%\authorrunning{N.D.Turgay \and S.Kaji}
%\author{Ne\c{s}et Den\.{i}z Turgay \and Shizuo Kaji
%          \thanks{The second named author was partially supported by KAKENHI, Grant-in-Aid for Young
%     Scientists (B) 26800043 and JSPS Postdoctoral Fellowships for Research Abroad.}
% }
%\institute{N.D. Turgay \at
%Eastern Mediterranean University,
%Gazimagusa, TRNC, Mersin 10, Turkey \\
%              \email{Deniz\_Turgay@Yahoo.com}          %  \\
%           \and
%           S. Kaji
% \at
%           Department of Mathematical Sciences,
%Faculty of Science, Yamaguchi University, 1677-1, Yoshida, Yamaguchi 753-8512, Japan
%              Tel.: +81-83-9335652\\
%              Fax: +81-83-9335652\\
%              \email{skaji@yamaguchi-u.ac.jp}
%}

%% arXiv
\author{Ne\c{s}et Den\.{i}z Turgay}
\address{
Eastern Mediterranean University,
Gazimagusa, TRNC, Mersin 10, Turkey}
\email{Deniz\_Turgay@Yahoo.com}
\author{Shizuo Kaji}
\thanks{The second named author was partially supported by KAKENHI, Grant-in-Aid for Young
     Scientists (B) 26800043 and JSPS Postdoctoral Fellowships for Research Abroad.}
\address{Department of Mathematical Sciences,
Faculty of Science, Yamaguchi University, 1677-1, Yoshida, Yamaguchi 753-8512, Japan
              Tel.: +81-83-9335652\\
              Fax: +81-83-9335652}
\email{skaji@yamaguchi-u.ac.jp}

\begin{abstract}
The mod $2$ Steenrod algebra $\A$ can be defined as the quotient of the mod $2$ Leibniz--Hopf algebra $\LL$ by the Adem relations.
Dually, the mod $2$ dual Steenrod algebra $\A^*$ can be thought of as a sub-Hopf algebra
of the mod $2$ dual Leibniz--Hopf algebra $\LL^*$.
We study $\A^*$ and $\LL^*$ from this viewpoint
and give generalisations of some classical results in the literature.
\end{abstract}

\subjclass[2010]{Primary 55S10; Secondary 16T05,57T05}
%\subclass{MSC2010 55S10 \and MSC2010 16T05 \and MSC2010 57T05}
\keywords{Leibniz--Hopf algebra \and   Steenrod algebra \and Adem relation \and Hopf algebra \and conjugation \and antipode}

\maketitle

\section{The mod $2$  Leibniz--Hopf algebra and its dual}\label{section1}
Let $\LL$ be the free associative algebra over $\F$ generated by
the indeterminates $S^1, S^2, S^3, \ldots$ of degree $|S^i|=i$.
We often denote the unit $1$ by $S^0$.
This algebra is equipped with a co-commutative co-product given by
\begin{equation}\label{coproduct}
\Delta(S^n) = \sum_{i=0}^n S^i \otimes S^{n-i},
\end{equation}
which makes it a graded connected Hopf algebra.
This algebra $\LL$ is often called the mod $2$ {Leibniz--Hopf algebra}.
As an $\F$-module, $\LL$ has the following canonical basis:
\[
\{ S^I:=S^{i_1}S^{i_2}\cdots S^{i_n} \mid
    I=(i_1,i_2,\ldots,i_n)\in \N^n, 0\le n <\infty\},
\]
where we regard $S^I=1$ when $n=0$.

Note that the integral counterpart of $\LL$ is called the Leibniz--Hopf algebra
 and is isomorphic to  the \emph{ring of non-commutative symmetric functions}
 (\cite{gelfand}) and the \emph{Solomon Descent algebra} (\cite{S}).
 Its graded dual is the ring of quasi-symmetric functions
with the outer co-product, which has been studied by Hazewinkel, Malvenuto, and Reutenauer in \cite{H0,H1,H2,H3,MR}.

The  mod $2$ Steenrod algebra $\A$ is defined to be the
quotient Hopf algebra of $\LL$ by
the ideal generated by the Adem relations:
\begin{equation}\label{adem}
S^i S^j - \sum_{k=0}^{\lfloor i/2 \rfloor} {{j-k-1}\choose{i-2k}} S^{i+j-k}S^k.
\end{equation}
Denote the quotient map by $\pi: \LL \to \A$ and $\Sq^i=\pi(S^i)$.
It is well-known (see, for example, \cite{SE}) that the \emph{admissible monomials}
\[
\{\Sq^J:=\Sq^{j_1}Sq^{j_2}\cdots Sq^{j_n} \mid
    J=(j_1,j_2,\ldots,j_n)\in \N_{>0}^n, 0\le n <\infty, j_{k-1} \ge 2 j_{k} \forall k\}
\]
form a module basis for $\A$.
We will adhere to this purely algebraic definition and will not use 
any other known facts about $\A$.

By taking the graded dual of $\pi$, we obtain the following inclusion of Hopf algebras
\[
\pi^*: \A^* \to \LL^*.
\]
$\LL^*$ is given a module basis $S_I$ dual to $S^I$, that is,
\[
 \langle S^{I'}, S_I \rangle= \begin{cases} 1 & (I=I') \\ 0 & (I\neq I') \end{cases}.
\]
Similarly, we have the dual basis $\{\Sq_J \mid J \text{ admissible} \}$
for $\A^*$ determined by
\[
 \langle \Sq^{J'}, \Sq_J \rangle = \begin{cases} 1 & (J=J') \\ 0 & (J\neq J') \end{cases}.
\]

The commutative product among the basis elements in $\LL^*$ is given by the
{\em overlapping shuffle product} (see \S \ref{osp}) and
the co-product is given by
\begin{equation}\label{co-product}
 \Delta(S_{a_1,\ldots,a_n})=S_{a_1,\ldots,a_n} \otimes 1 + 1 \otimes S_{a_1,\ldots,a_n}+
 \sum_{i=1}^{n-1} S_{a_1,\ldots,a_i} \otimes S_{a_{i+1},\ldots,a_n}.
\end{equation}

The purpose of this paper is to deduce some of the classical results on $\A^*$
and its generalisations by considering it as a subalgebra of $\LL^\ast$.
We are particularly interested in the following problems.
\begin{problem}\label{prblems}
\begin{enumerate}[(i)]
\item \label{prob:1} Determine the coefficients in
\begin{equation}\label{eq:pi^*}
 \pi^*(\Sq_J) = \sum_{I} C^I_J S_I
\end{equation}
for all admissible sequences $J$.
This is important since in the dual
it is equivalent to computing the coefficients of the Adem relations
\begin{equation}\label{eq:pi}
 \Sq^I = \sum_{J:\text{admissible}} C^I_J \Sq^J
\end{equation}
for all sequences $I$.
%\item In particular, determine $I$ with $C^I_J=0$ for all $J$.
%Dually, determine $S_I$'s which never appear in the summand of any $\pi^*(\Sq_J)$.
%That is, identify the minimal coordinate subspace which contains $\mathrm{Im}\pi^*$.
%(or at least, give its dimension).
\item \label{prob:2}
 Give an expansion of the dual Milnor bases in terms of the dual admissible monomial bases, i.e.,
determine the coefficient $B^L_J$ in
\[
 \xi^L = \sum_{J:\text{admissible}} B^L_J \Sq_J,
\]
where $\xi_n=\Sq_{2^{n-1},2^{n-2}\cdots 2^1,2^0}$ and
$\xi^L=\xi_1^{l_1}\xi_2^{l_2}\cdots \xi_n^{l_n}$ for $L=(l_1,l_2,\ldots, l_n)$.
%\item In particular, give a necessary and sufficient condition for
%\[
% \xi^I = \Sq_{\gamma^{-1}(I)},
%\]
%where $\gamma^{-1}(I)=\sum_{k=1}^n 2^{k-1} i_k$.
\item \label{prob:3}
Generalise Milnor's conjugation formula (\cite{Milnor}) in $\A^*$ to $\LL^*$.
The formula for $\A^*$ is:
\[
\chi(\xi_n)= \sum_\alpha \prod_{i=1}^{l(\alpha)} \xi_{\alpha(i)}^{2^{\sigma(i)}},
\]
where $\alpha=(\alpha(1)|\alpha(2)|\ldots|\alpha(l(\alpha))$ runs through all the compositions of the integer $n$ and
$\sigma(i)=\sum_{j=1}^{i-1} \alpha(j)$.
\end{enumerate}
\end{problem}
%\begin{remark}
%For (2) of Problem \ref{prblems}, by Equation \ref{adem},
%\[
% C^{(i,j)}_{(h,k)}=\begin{cases} { {j-k-1}\choose{i-2k} } & (i+j=h+k, h\ge 2k) \\ 0 & (\text{otherwise}) \end{cases}.
%\]
%We are particularly interested in the cases when $|I|>2$.
%\end{remark}
Several different methods are known for resolving
(\ref{prob:1}) and (\ref{prob:2}) (see for example, \cite{Monks,NDT}), 
but our argument (\S \ref{computation}) is new in that
it is purely combinatorial using
the overlapping shuffle product on $\LL^\ast$. 
We implemented our algorithm into a Maple code
\cite{code}.
In \S \ref{conjugation} we discuss the conjugation (or antipode) in $\LL^\ast$ and give an answer to (\ref{prob:3}).
Finally, we give an explicit duality between the conjugation invariants in $\LL$ and $\LL^\ast$
in \S \ref{sec:duality}.

%%%%

\section{Overlapping Shuffle product}\label{osp}
We recall the definition of the overlapping shuffle product
(\cite[Section~2]{CrossleyHopf},\cite{H0}).
Let $\W $ be the set of finite sequences of natural numbers:
\[
\W  = \{ (i_1,i_2,\ldots,i_n) \mid 0\le n <\infty \}.
\]
Note that we allow the length $0$ sequence.
Consider the $\F$-module $\WW$ freely generated by $\W$.
For a sequence $I=(i_1,i_2,\ldots,i_n)$,
denote its tail partial sequence $(i_k,i_{k+1},\ldots,i_n)$ by $I_k$.
When $n<k$, we regard $I_k$ as the length $0$ sequence.
We use the convention
\[
(a_1,a_2,\ldots,a_k,(b_1,\ldots,b_i)+(c_1,\ldots,c_j)):=(a_1,a_2,\ldots,a_k,b_1,\ldots,b_i)+
(a_1,a_2,\ldots,a_k,c_1,\ldots,c_j).
\]
The {\em overlapping shuffle product} on $\WW$ is defined as follows:
\begin{definition}

For $A=(a_1,a_2,\ldots,a_n)$ and $B=(b_1,b_2,\ldots,b_m)$, define their product inductively by
\[
 A\cdot B:= \begin{cases} A & (m=0) \\ B & (n=0) \\
 \sum_{0\le i \le n} (a_1,\ldots, a_i, b_1, A_{i+1}\cdot B_2) + \sum_{1\le i \le n} 
 (a_1,\ldots,  a_i+b_1, A_{i+1}\cdot B_2) & (\text{otherwise}). \end{cases}
\]
The product on $\WW$ is defined by the linear extension of the above.
\end{definition}

We say a term in $A\cdot B$ is {\em $a$-first} if there exists $k$ such that $a_k$ goes\footnote{when calculated symbolically}
to an entry to the left of $b_k$ and $a_i$ goes to the same entry as $b_i$ (that is, the entry makes $a_i+b_i$) for all $i<k$.
For example, $(a_1+b_1,a_2,b_2,b_3,a_3)$ is $a$-first while $(a_1+b_1,b_2,a_2,a_3,b_3)$ is not.
Observe that
\begin{lemma}\label{eqlength}
For equal length sequences, we have
\[
(a_1,\ldots,a_n) \cdot (b_1,\ldots,b_n)= (a_1+b_1,\ldots,a_n+b_n) + Z+ \tau(Z),
\]
where $Z$ is a sum of $a$-first terms and $\tau$ flips the occurrence of $a_i$ and $b_i$ for all $i$.
In particular, the product is commutative.
\end{lemma}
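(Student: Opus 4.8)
The plan is to replace the recursive definition by an explicit enumeration of the terms of $A\cdot B$ and then read the symmetry off that enumeration. I would encode a term by a \emph{pattern}: a word in the symbols $a$, $b$, $ab$, one per slot of the output, in which the number of $a$- and $ab$-slots equals $n$ and the number of $b$- and $ab$-slots equals $m$. The associated sequence is obtained by filling the $a$- and $ab$-slots with $a_1,a_2,\ldots$ from left to right, the $b$- and $ab$-slots with $b_1,b_2,\ldots$ from left to right, and summing the two entries in each $ab$-slot. The first step is to prove, by induction on $m$, that $A\cdot B$ is the sum over all such patterns of the corresponding sequences. This induction is exactly the recursion in the Definition: the slot carrying $b_1$ is the leftmost slot of type $b$ or $ab$, every slot to its left has type $a$, and that slot is either a $b$-slot preceded by $a_1,\ldots,a_i$ (the first sum) or an $ab$-slot equal to $a_i+b_1$ (the second sum), after which the remaining slots form a pattern for $A_{i+1}\cdot B_2$.

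The second step observes that this enumeration is symmetric in $A$ and $B$: renaming the two sequences carries patterns for $A\cdot B$ bijectively onto patterns for $B\cdot A$ and matches the filled sequences, whence $A\cdot B=B\cdot A$, which is the asserted commutativity. Working symbolically, as in the footnote, distinct patterns give distinct sequences --- each slot displays precisely which $a_i$ and which $b_j$ it contains --- so every term occurs with coefficient $1$ and no cancellation can arise.

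Now take $m=n$. Every pattern then has $\#a=\#b$, since both equal $n-\#(ab)$. The unique pattern with no $a$- or $b$-slot is $(ab)^n$, producing the diagonal term $(a_1+b_1,\ldots,a_n+b_n)$. For any other pattern, let $k$ be the least index at which $a_k$ and $b_k$ do not occupy the same slot; an order-preservation argument shows that the first $k-1$ slots are all $ab$-slots (holding $a_i+b_i$) and that the next occupied slot contains exactly one of $a_k,b_k$, so the term is $a$-first or $b$-first. Letting $\tau$ swap $a_i\leftrightarrow b_i$, I would check that on patterns $\tau$ interchanges the symbols $a$ and $b$ and fixes $ab$; because $\#a=\#b$ this yields another legal pattern for $A\cdot B$. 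Hence $\tau$ is a fixed-point-free involution on the non-diagonal terms that interchanges the $a$-first and $b$-first ones, and writing $Z$ for the sum of the $a$-first terms gives $A\cdot B=(a_1+b_1,\ldots,a_n+b_n)+Z+\tau(Z)$.

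The step I expect to be the real obstacle is the verification that $\tau$ returns terms of $A\cdot B$ to terms of $A\cdot B$. This is precisely where equal length is essential: swapping the symbols $a$ and $b$ preserves the slot counts only when $\#a=\#b$, and the analogous statement fails for unequal lengths. One must also confirm the bookkeeping that $\tau$ of a filled pattern coincides with the filling of the swapped pattern --- the indices in the $ab$-slots must reindex correctly --- which is routine but is exactly where a hasty argument would go wrong.
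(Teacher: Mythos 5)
Your argument is correct, and it is essentially the intended one: the paper offers no proof of this lemma at all (it is prefaced only by ``Observe that''), so your pattern enumeration simply makes rigorous the combinatorial description of the overlapping shuffle that the authors take for granted. The points you flag as delicate --- that the first $k-1$ slots of a non-diagonal term must all be merged slots, and that swapping the symbols $a$ and $b$ in a pattern reindexes the entries so that the filled sequence is exactly $\tau$ of the original --- both check out, so there is no gap.
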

\begin{example}
{\small
\begin{align*}
(a_1,a_2)\cdot (b_1,b_2)&=(a_1+b_1,a_2+b_2) \\
       &+ (a_1+b_1,a_2,b_2)+(a_1,b_1,a_2+b_2)+(a_1,b_1+a_2,b_2)
        +(a_1,a_2,b_1,b_2)+(a_1,b_1,a_2,b_2)+(a_1,b_1,b_2,a_2) \\
        &+(b_1+a_1,b_2,a_2)+(b_1,a_1,b_2+a_2)
        +(b_1,a_1+b_2,a_2)+(b_1,b_2,a_1,a_2)+(b_1,a_1,b_2,a_2)+(b_1,a_1,a_2,b_2),
\end{align*}
}
where the second line consists of $a$-first terms
and the third line is the $\tau$-image of the second line.
\end{example}

\begin{corollary}\label{osp-power}
For $A=(a_1,\ldots,a_n)$,
\[
 A\cdot A=(2a_1,\ldots,2a_n), \quad A^{2^m}=(2^m a_1,\ldots,2^m a_n).
\]
\end{corollary}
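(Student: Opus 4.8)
The plan is to derive both identities directly from Lemma \ref{eqlength}, using in an essential way that we work over $\F$.

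First I would treat the identity $A\cdot A=(2a_1,\ldots,2a_n)$. Applying Lemma \ref{eqlength} with $B=A$, that is $b_i=a_i$ for all $i$, gives
\[
A\cdot A = (a_1+a_1,\ldots,a_n+a_n) + Z + \tau(Z) = (2a_1,\ldots,2a_n) + Z + \tau(Z),
\]
where $Z$ is a sum of $a$-first terms and $\tau$ interchanges the occurrences of $a_i$ and $b_i$. The point is then to show that the remainder $Z+\tau(Z)$ vanishes. Since $\tau$ merely swaps the symbolic labels $a_i \leftrightarrow b_i$ without changing positions, and since $b_i=a_i$ numerically, every term $t$ of $Z$ and its image $\tau(t)$ specialise to one and the same sequence of natural numbers. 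Hence $Z+\tau(Z)$ specialises to $2Z$, which is $0$ over $\F$, and we conclude $A\cdot A=(2a_1,\ldots,2a_n)$.

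The second identity I would obtain by induction on $m$. The cases $m=0$ and $m=1$ are trivial and the identity just proved, respectively. For the inductive step I would write $A^{2^m}=(A^{2^{m-1}})\cdot(A^{2^{m-1}})$, note that by the induction hypothesis $A^{2^{m-1}}=(2^{m-1}a_1,\ldots,2^{m-1}a_n)$ is again a length-$n$ sequence, and apply the first identity to it to get $(2\cdot 2^{m-1}a_1,\ldots,2\cdot 2^{m-1}a_n)=(2^m a_1,\ldots,2^m a_n)$.

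The only genuinely substantive point—really the heart of the argument—is recognising that the error terms $Z$ and $\tau(Z)$ become identical once $b_i=a_i$, so that their sum cancels modulo $2$; the rest is bookkeeping and a one-line induction. I do not expect any real obstacle beyond making this cancellation precise, and in particular verifying that $\tau$ pairs the terms of $Z$ bijectively with those of $\tau(Z)$ even after the possible numerical collisions caused by the specialisation $b_i=a_i$.
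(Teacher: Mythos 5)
Your proof is correct and is essentially the paper's: the paper likewise observes that when $B=A$ the flip $\tau$ of Lemma \ref{eqlength} becomes the identity, so $Z+\tau(Z)=2Z=0$ over $\F$, and the power formula follows by iterating. You spell out the specialisation $b_i=a_i$ and the induction on $m$ in more detail, but the key idea is identical.
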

\begin{proof}
In this case, the flip map $\tau$ in Lemma \ref{eqlength} is the identity.
\end{proof}

It is easy to see from the duality relation 
$\langle S_I S_J, S^K \rangle=\langle S_I\otimes S_J, \Delta(S^K) \rangle$
that the product on $\LL^*$ dual to \eqref{coproduct} is
given by $S_I S_J= \sum_{K\in I\cdot J} S_K$.

%%%$\pi^*: \A^* \to \LL^*$
\section{Dual Steenrod algebra as a sub-Hopf algebra of $\LL^*$}
To identify the image of the inclusion $\pi^*:\A^* \to \LL^*$, we prove some lemmas in this section.
Let $\xi_n=\Sq_{2^{n-1}, 2^{n-2}, \ldots, 2^0}$.
\begin{lemma}[{cf. \cite{CrossleyHopf,NDT}}]\label{pi-image}
 We have
\begin{eqnarray*}
\pi^*(\Sq_{2^n}) &=& S_{2^n}, \\
\pi^*(\xi_n) &=& S_{2^{n-1}, 2^{n-2}, \ldots, 2^0}.
\end{eqnarray*}
\end{lemma}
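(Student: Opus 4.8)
The plan is to pass to the dual description of the coefficients. By the definition of $\pi^*$, for any admissible $J$ we have $\pi^*(\Sq_J)=\sum_I \langle \Sq_J,\pi(S^I)\rangle\, S_I=\sum_I C^I_J\, S_I$, where $C^I_J$ is the coefficient of the admissible monomial $\Sq^J$ in the Adem expansion of $\Sq^I$. Hence the two assertions are equivalent to the combinatorial statements $C^I_{(2^n)}=\delta_{I,(2^n)}$ and $C^I_{\omega_n}=\delta_{I,\omega_n}$, where $\omega_n=(2^{n-1},2^{n-2},\ldots,2^0)$. The diagonal values are immediate, since both $\Sq^{2^n}$ and $\Sq^{\omega_n}$ are admissible (for the latter, $2^{k}=2\cdot 2^{k-1}$ meets the admissibility inequalities), so $C^{(2^n)}_{(2^n)}=C^{\omega_n}_{\omega_n}=1$; the content is the vanishing of all off-diagonal coefficients. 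Throughout I use that $\pi$ is a morphism of Hopf algebras, so that its dual $\pi^*$ is simultaneously an algebra map and a coalgebra map.

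For the first formula I would avoid the Adem relations entirely and argue multiplicatively. In degree one there are no relations, so $\pi^*(\xi_1)=\pi^*(\Sq_1)=S_1$. Since $\pi^*$ is an algebra homomorphism and the product on $\LL^*$ is the overlapping shuffle, Corollary \ref{osp-power} gives
\[
\pi^*(\xi_1^{2^n})=\big(\pi^*(\xi_1)\big)^{2^n}=S_1^{2^n}=S_{2^n}.
\]
Pairing this against the basis of $\LL$ and using the adjunction $\langle S^J,\pi^*(\alpha)\rangle=\langle \Sq^J,\alpha\rangle$, I obtain $\langle \Sq^J,\xi_1^{2^n}\rangle=\langle S^J,S_{2^n}\rangle=\delta_{J,(2^n)}$ for every sequence $J$. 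Restricting to admissible $J$ and expanding in the dual admissible basis yields the identity $\xi_1^{2^n}=\Sq_{2^n}$ in $\A^*$, whence $\pi^*(\Sq_{2^n})=\pi^*(\xi_1^{2^n})=S_{2^n}$. This settles the first equality with no computation beyond Corollary \ref{osp-power}.

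The second formula is the substantial one, because $\xi_n$ is not a power of a lower generator and the multiplicative shortcut is unavailable. Here I would induct on $n$, assuming $\pi^*(\xi_i)=S_{\omega_i}$ for all $i<n$. Applying \eqref{co-product} together with Corollary \ref{osp-power} (which identifies the initial segment $S_{(2^{n-1},\ldots,2^{n-i})}=S_{\omega_i}^{2^{n-i}}$) computes the reduced coproduct in $\LL^*$ as
\[
\bar\Delta(S_{\omega_n})=\sum_{i=1}^{n-1} S_{\omega_i}^{2^{n-i}}\otimes S_{\omega_{n-i}}.
\]
Because $\pi^*$ is a coalgebra map, $\bar\Delta(\pi^*\xi_n)=(\pi^*\otimes\pi^*)\bar\Delta(\xi_n)$; so if the reduced coproduct of $\xi_n$ in $\A^*$ has the Milnor shape $\sum_{i=1}^{n-1}\xi_i^{2^{n-i}}\otimes \xi_{n-i}$, then the induction hypothesis and the algebra-map property turn the right-hand side into exactly the displayed sum, forcing $\pi^*(\xi_n)-S_{\omega_n}$ to be primitive. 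By \eqref{co-product} the primitives of $\LL^*$ are precisely the length-one elements $S_{(m)}$, so in degree $2^n-1$ this difference is a multiple of $S_{(2^n-1)}$; its coefficient is $\langle \Sq^{2^n-1},\xi_n\rangle=\delta_{(2^n-1),\omega_n}=0$, since $\Sq^{2^n-1}$ is admissible of length one and $n\ge 2$. Thus $\pi^*(\xi_n)=S_{\omega_n}$.

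The main obstacle is exactly the input invoked above: that the reduced coproduct of $\xi_n$ in $\A^*$ has the stated shape, equivalently that $\Sq^{\omega_n}$ occurs in the Adem expansion of a product $\Sq^J\Sq^{J'}$ of positive-degree admissibles only in the combinations recorded by $\sum_i \xi_i^{2^{n-i}}\otimes\xi_{n-i}$. This is the genuinely combinatorial heart of the lemma, and it is where the Adem relations (or, dually, the overlapping-shuffle computations developed in \S\ref{computation}, cf. \cite{CrossleyHopf,NDT}) must enter. I expect the cleanest route to be a direct induction on length, showing that the binomial coefficients produced by the relevant Adem reductions vanish mod $2$ outside the predicted terms, in parallel with the elementary fact that $\binom{b-1}{a}\equiv 0$ whenever $a+b=2^n$ and $a\ge 1$.
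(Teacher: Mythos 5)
Your treatment of the first equation is correct and genuinely different from the paper's. The paper argues directly with the Adem relations: it reduces to a single length-two relation $\Sq^i\Sq^j$ with $i+j=2^n$ and kills the offending coefficient $\binom{2^n-1-i}{i}$ by Lucas' theorem. You instead exploit that $\pi^*$ is an algebra map together with Corollary \ref{osp-power} to get $\pi^*(\xi_1^{2^n})=S_1^{2^n}=S_{2^n}$, then pair against $S^J$ to conclude $\xi_1^{2^n}=\Sq_{2^n}$. That is a clean, purely multiplicative alternative which avoids binomial arithmetic entirely, and it is sound.

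The second equation, however, has a genuine gap, and you have located it yourself: your induction needs as input that $\bar\Delta(\xi_n)=\sum_{i=1}^{n-1}\xi_i^{2^{n-i}}\otimes\xi_{n-i}$ in $\A^*$, and you do not prove this. Within the paper's framework this input is not an allowable black box (the paper explicitly restricts itself to the algebraic definition of $\A$ via the Adem relations and uses no other known facts), and it is not a simplification either: computing $\bar\Delta(\Sq_{\omega_n})$ in $\A^*$ requires knowing the coefficient of $\Sq^{\omega_n}$ in the Adem reduction of every product $\Sq^J\Sq^{J'}$ of admissibles, which is essentially the full content of the lemma. Indeed the logical flow naturally runs the other way: once $\pi^*(\xi_n)=S_{\omega_n}$ is known, Milnor's coproduct formula follows immediately from \eqref{co-product} because $\pi^*$ is an injective coalgebra map, so deriving the lemma from that formula is circular in spirit. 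The way the paper closes this is short and elementary, in the spirit of its proof of the first equation: assume a single length-two Adem application to $\Sq^i\Sq^j$ with $i<2j$ produces $\Sq^{2^{n-k}}$ or $\Sq^{2^{n-k}}\Sq^{2^{n-k-1}}$ as a summand; the former is excluded by the first equation, and the latter forces $i+j=2^{n-k}+2^{n-k-1}$ and $\lfloor i/2\rfloor\ge 2^{n-k-1}$, whence $j\le 2^{n-k-1}\le i/2$, contradicting $i<2j$. Your skeleton (coproduct comparison, primitivity, vanishing of the $S_{(2^n-1)}$ coefficient) is internally consistent, but as it stands the proof of the second formula is incomplete until the coproduct input is established by some such direct Adem computation.
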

\begin{proof}
For the first equation,
we have to show that for any non-admissible sequence $I$, the right-hand side of
\[
 \Sq^I = \sum_{J:\text{admissible}} C^I_J \Sq^J
\]
does not contain $\Sq^{2^n}$.
If there exists such an $I$, we can assume it has length two, that is, $I=(i,j)$.
(Because the right-hand side is obtained by successively applying the length two relations.)
By the Adem relations in Equation \eqref{adem}, we have $i+j=2^n$ and
\[
1\equiv \binom{j-1}{i}\equiv \binom{2^n-1-i}{i} \mod 2.
\]
However, the binary expressions of $2^n-1-i$ and $i$ are complementary
and the binary expression of $2^n-1-i$ contains at least one digit with $0$.
Hence, by Lucas' Theorem, we have $\binom{2^n-1-i}{i}\equiv 0 \mod 2$; we arrive at a contradiction.

For the second equation, suppose that there exists an $I=(i,j)$
 such that  $i < 2j$ and
 \[
 \Sq^{i,j} = 
 \sum_{k=0}^{\lfloor i/2 \rfloor} {{j-k-1}\choose{i-2k}} \Sq^{i+j-k} \Sq^k
 \]
contains $\Sq^{2^{n-k}}$ or $\Sq^{2^{n-k}}\Sq^{2^{n-k-1}}$ as a summand.
The former case is already ruled out by the first equation.
For the latter case to happen, we should have
\[
 i+j=2^{n-k}+2^{n-k-1}, \lfloor i/2 \rfloor \ge 2^{n-k-1}.
\]
But this implies $j\le 2^{n-k-1}$ so $i\ge 2j$; we arrive at a contradiction.
\end{proof}

Put $\bar{\xi}_n=\pi^*(\xi_n) = S_{2^{n-1}, 2^{n-2}, \ldots, 2^0}$.
We denote by $\AA^*$ the subalgebra of $\LL^\ast$ generated by
$\{\bar{\xi}_n \mid 0<n \}$.
For a sequence $L=(l_1,l_2,\ldots,l_n)$ of non-negative integers,
we denote $\bar{\xi}_1^{l_1} \bar{\xi}_2^{l_2} \cdots \bar{\xi}_n^{l_n}$
by $\bar{\xi}^L$.
Then, the monomials $\bar{\xi}^L$ span $\AA^*$.
Now, we identify $\AA^*$ with $\mathrm{Im}(\pi^*)$.

Recall the definition of the {\em excess vector} of an admissible sequence $J=(j_1,j_2,\ldots,j_n)$:
\[
 \gamma(j_1,j_2,\ldots,j_n)=(j_1-2j_2, j_2-2j_3,\ldots,j_{n-1}-2j_n,j_n).
\]
This gives a bijection between admissible sequences and sequences of non-negative integers.
The inverse is given by
\[
\gamma^{-1}(l_1,l_2,\ldots,l_n) = (l_1+2l_2+2^2l_3+\cdots+2^{n-1}l_n,\ldots,l_{n-1}+2l_n,l_n).
\]
We put the right lexicographic order on $\W$, i.e.,
\[
 (a_1,a_2,\ldots,a_n) > (b_1,b_2,\ldots,b_m)
  \Leftrightarrow (n>m) \text{ or } (\exists k, a_k>b_k \text{ and } a_i=b_i \forall i>k).
\]
This induces an ordering on the basis elements $S_I$
which is compatible with the overlapping shuffle product.
Observe that the lowest term in the product $S_I \cdot S_{I'}$
for $I=(i_1,i_2,\ldots)$ and $I'=(i'_1,i'_2,\ldots)$ is
$S_{(i_1+i'_1,i_2+i'_2,\ldots)}$.

\begin{lemma}\label{triangularity}
For an admissible sequence $J$,
\[
 \langle \bar{\xi}^{\gamma(J)}, S^I \rangle =
  \begin{cases} 1 & (I=J) \\ 0 &  (I<J). \end{cases}
\]
\end{lemma}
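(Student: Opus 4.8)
The assertion is a statement about a single coefficient: since $\{S_I\}$ and $\{S^I\}$ are dual bases, $\langle \bar{\xi}^{\gamma(J)}, S^I\rangle$ is exactly the coefficient of $S_I$ in the expansion of $\bar{\xi}^{\gamma(J)} \in \LL^*$. Thus the lemma is equivalent to the statement that $S_J$ is the lowest term of $\bar{\xi}^{\gamma(J)}$ with respect to the right lexicographic order and that it occurs with coefficient $1$. My plan is to prove exactly this, and it splits into a leading-term computation and a multiplicity count.

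For the leading term I would use the compatibility of the order with the product recorded after the definition of the order: the lowest term of $S_A\cdot S_B$ is the left-aligned componentwise sum $S_{(a_1+b_1,a_2+b_2,\ldots)}$. The first point to nail down is that this lowest term occurs with coefficient $1$. All entries of each generator $\bar{\xi}_k=S_{2^{k-1},\ldots,2^0}$ are positive, and the overlapping shuffle product only adds or juxtaposes entries, so every basis element occurring in any product of the $\bar{\xi}_k$ has strictly positive entries. For two such factors, the shortest (hence lowest) terms of $S_A\cdot S_B$ come from the overlappings in which every entry of the shorter factor is merged with an entry of the longer one; writing the merge positions as an increasing tuple, any overlapping other than the one matching $a_i$ with $b_i$ must merge at some position to the right of the front block, and there a merge strictly increases that entry and hence raises the term in the right lexicographic order (this is exactly where positivity is used). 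So the left-aligned sum is the unique shortest term, produced by the single overlapping that merges $a_i$ with $b_i$, and its coefficient is $1$; no cancellation is possible. Iterating, and using that the order is compatible with the product, the leading-term map is multiplicative with unit coefficients on products of the $\bar{\xi}_k$.

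Now I would compute the leading term of $\bar{\xi}^{\gamma(J)}=\bar{\xi}_1^{l_1}\cdots\bar{\xi}_n^{l_n}$ with $L=\gamma(J)=(l_1,\ldots,l_n)$. By the previous step the lowest term of $\bar{\xi}_k^{l_k}$ is $S_{(l_k2^{k-1},\ldots,l_k2^0)}$, and the lowest term of the whole product is the left-aligned sum of these, again with coefficient $1$. Its entry in position $p$ is
\[
\sum_{k\ge p} l_k\,2^{k-p}=l_p+2l_{p+1}+\cdots+2^{n-p}l_n,
\]
which is precisely the $p$-th entry of $\gamma^{-1}(L)=\gamma^{-1}(\gamma(J))=J$. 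Hence the lowest term of $\bar{\xi}^{\gamma(J)}$ is $S_J$, occurring with coefficient $1$, and every basis element $S_I$ appearing satisfies $I\ge J$. This gives coefficient $0$ for $I<J$ and coefficient $1$ for $I=J$, which is the claim.

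The step I expect to be the main obstacle is the multiplicity count, namely ruling out cancellation of the lowest term over $\F$. This is exactly where the positivity of all entries is essential: it guarantees that the minimal term is produced by a single overlapping and therefore survives in characteristic $2$, whereas the longer terms are free to cancel.
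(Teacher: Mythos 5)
Your proof is correct and rests on the same key observation as the paper's --- that the lowest term of an overlapping shuffle product of sequences with positive entries is the left-aligned componentwise sum, so the leading term of $\bar{\xi}^{\gamma(J)}$ is $S_{\gamma^{-1}(\gamma(J))}=S_J$; the paper merely packages this as an induction on $J$ (peeling off one factor $\bar{\xi}_n$ at a time via $J'=(j_1-2^{n-1},\ldots,j_n-2^0)$) rather than as your direct computation of the whole product. Your explicit justification that the minimal term survives with coefficient $1$ (no cancellation over $\F$) fills in a point the paper leaves implicit.
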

\begin{proof}
We proceed by induction on $J=(j_1,\ldots,j_n)$.
Put $J'=(j_1-2^{n-1},j_2-2^{n-2},\ldots,j_n-2^0)$. Then by induction hypothesis,
\[
\bar{\xi}^{\gamma(J')}=S_{J'} + (\text{terms higher than } S_{J'}).
\]
It follows that
\begin{eqnarray*}
\bar{\xi}^{\gamma(J)} &=& \bar{\xi}^{\gamma(J')} \cdot \bar{\xi}_n\\
 &=&
(S_{J'} + (\text{terms higher than } S_{J'}))\cdot S_{2^{n-1},2^{n-2},\ldots,2^0} \\
&=& S_{J} + (\text{terms higher than } S_J).
\end{eqnarray*}
\end{proof}

By this upper-triangularity,
the monomials $\bar{\xi}^L$ are linearly independent
and we have
\begin{theorem}
\[
 \mathrm{Im}(\pi^*) = \AA^* = \F[\bar{\xi}_1,\bar{\xi}_2,\ldots,].
\]
\end{theorem}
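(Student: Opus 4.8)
The plan is to prove the two asserted equalities in turn, with Lemma \ref{triangularity} doing essentially all the work. Throughout I will use that $\LL^*$ is graded, commutative, and finite-dimensional in each degree, and that $\pi^*$ is an \emph{injective} algebra homomorphism.

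First I would establish $\AA^* = \F[\bar{\xi}_1,\bar{\xi}_2,\ldots]$. Since $\LL^*$ is commutative, $\AA^*$ is the image of the evaluation map $\F[x_1,x_2,\ldots] \to \LL^*$ sending $x_n \mapsto \bar{\xi}_n$, so it suffices to show this map is injective, i.e.\ that the monomials $\bar{\xi}^L$ are linearly independent. Lemma \ref{triangularity} gives, for each admissible $J$, the expansion $\bar{\xi}^{\gamma(J)} = S_J + (\text{terms strictly higher than } S_J)$ in the right lexicographic order. Because $\gamma$ is a bijection between admissible sequences and non-negative integer sequences $L$, the monomials $\bar{\xi}^L$ have pairwise distinct leading terms $S_J$ and are therefore linearly independent. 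Since they also span $\AA^*$, they form a basis and $\AA^*$ is the free polynomial algebra on the $\bar{\xi}_n$.

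Next I would check the inclusion $\AA^* \subseteq \mathrm{Im}(\pi^*)$, which is immediate: $\pi^*$ is an algebra map and $\bar{\xi}_n = \pi^*(\xi_n)$, so $\bar{\xi}^L = \pi^*(\xi^L)$ lies in $\mathrm{Im}(\pi^*)$ for every $L$, and these monomials span $\AA^*$. The reverse inclusion I would obtain by a graded dimension count. Injectivity of $\pi^*$ gives $\dim_\F \mathrm{Im}(\pi^*)_d = \dim_\F (\A^*)_d$, which equals the number of admissible $J$ with $|S_J| = d$. By the previous paragraph, $\dim_\F \AA^*_d$ equals the number of $L$ with $|\bar{\xi}^L| = d$. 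The bookkeeping $|\bar{\xi}_n| = 2^n - 1$ together with $|S_{\gamma^{-1}(L)}| = \sum_k l_k(2^k-1)$ shows that the bijection $J \leftrightarrow \gamma(J)$ is degree-preserving, so the two counts agree in every degree. Combining this with the inclusion $\AA^* \subseteq \mathrm{Im}(\pi^*)$ and finite-dimensionality of each graded piece forces $\AA^*_d = \mathrm{Im}(\pi^*)_d$ for all $d$, hence $\AA^* = \mathrm{Im}(\pi^*)$.

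The conceptual weight is carried entirely by the upper-triangularity of Lemma \ref{triangularity}; once that is in hand, the only remaining point is the elementary degree computation matching $|S_J|$ with $|\bar{\xi}^{\gamma(J)}|$, which I do not expect to present any genuine obstacle.
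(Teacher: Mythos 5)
Your proposal is correct and follows essentially the same route as the paper: linear independence of the monomials $\bar{\xi}^L$ via the upper-triangularity of Lemma \ref{triangularity}, combined with a graded dimension count against $\A^*$ using injectivity of $\pi^*$. You merely spell out the degree bookkeeping that the paper leaves implicit.
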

\begin{proof}
By Lemma \ref{triangularity}
in each degree $\AA^*$ has the same dimension as $\A^*$
(the number of admissible sequences).
\end{proof}
This is nothing but the well-known fact:
\begin{corollary}[\cite{Milnor}]\label{A-triangularity}
\[
 \A^*=\F[\xi_1,\xi_2,\ldots,],
\]
where
\[
 \xi^{\gamma(J)}= \Sq_J + \text{(terms higher than $\Sq_J$)}.
\]
\end{corollary}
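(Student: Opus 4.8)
The plan is to transport the Theorem across the isomorphism $\pi^*\colon\A^*\to\mathrm{Im}(\pi^*)$. Since $\pi\colon\LL\to\A$ is a surjection of graded connected Hopf algebras, its graded dual $\pi^*$ is an injective Hopf algebra homomorphism, hence an algebra isomorphism onto its image. By the Theorem this image is $\AA^*=\F[\bar{\xi}_1,\bar{\xi}_2,\ldots]$, and by construction $\pi^*(\xi_n)=\bar{\xi}_n$. First I would observe that the subalgebra of $\A^*$ generated by $\{\xi_n \mid 0<n\}$ maps onto the subalgebra of $\LL^*$ generated by $\{\bar{\xi}_n\}$, which is all of $\mathrm{Im}(\pi^*)$; injectivity of $\pi^*$ then forces $\{\xi_n\}$ to generate $\A^*$.

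For algebraic independence, I would push any polynomial relation $P(\xi_1,\xi_2,\ldots)=0$ in $\A^*$ forward by $\pi^*$ to obtain $P(\bar{\xi}_1,\bar{\xi}_2,\ldots)=0$ in $\LL^*$; since the $\bar{\xi}_n$ are algebraically independent (they generate the polynomial algebra $\AA^*$), we conclude $P=0$. Together with the previous step this yields $\A^*=\F[\xi_1,\xi_2,\ldots]$.

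For the triangularity statement, the key is the duality $\langle \pi(x),y\rangle=\langle x,\pi^*(y)\rangle$. I would compute the coefficient of the dual admissible generator $\Sq_{J'}$ in the expansion of $\xi^{\gamma(J)}\in\A^*$. Since $\{\Sq^{J'}\}$ and $\{\Sq_{J'}\}$ are dual bases, this coefficient equals $\langle \Sq^{J'},\xi^{\gamma(J)}\rangle$. Using $\pi(S^{J'})=\Sq^{J'}$ and that $\pi^*$ is dual to $\pi$, this becomes $\langle S^{J'},\pi^*(\xi^{\gamma(J)})\rangle=\langle S^{J'},\bar{\xi}^{\gamma(J)}\rangle$. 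Lemma \ref{triangularity} now gives the value $1$ when $J'=J$ and $0$ when $J'<J$; as all indices $J'$ occurring are admissible, the expansion reads $\xi^{\gamma(J)}=\Sq_J+(\text{terms higher than }\Sq_J)$.

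The argument is essentially formal once the Theorem and Lemma \ref{triangularity} are in hand; the only point requiring care is the bookkeeping in the last paragraph. I must make sure the pairing is evaluated against $S^{J'}$ for \emph{admissible} $J'$ only, so that the strict lower-triangular vanishing in Lemma \ref{triangularity} translates into the vanishing of every coefficient $\Sq_{J'}$ with $J'<J$, while the coefficients for $J'>J$ are left unconstrained and absorbed into the higher terms. No genuine obstacle is expected here: the content lives in the Theorem, and this corollary merely re-reads it inside $\A^*$.
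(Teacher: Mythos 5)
Your proposal is correct and follows essentially the same route as the paper, which presents this corollary as the immediate translation of the Theorem and Lemma \ref{triangularity} through the injection $\pi^*$; you have simply written out the routine bookkeeping (generation, algebraic independence, and the pairing computation $\langle \Sq^{J'},\xi^{\gamma(J)}\rangle=\langle S^{J'},\bar{\xi}^{\gamma(J)}\rangle$) that the paper leaves implicit. No gaps.
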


\section{Computation with $\pi^*$}\label{computation}
Recall from \cite[Section~4]{NDT} the linear left inverse $r\colon\LL^* \to \A^*$ of $\pi^*$:
\[
 r(S_I)=\begin{cases} Sq_I & (I:\text{admissible}) \\ 0 & (\text{otherwise}). \end{cases}
\]
For (\ref{prob:2}) of Problem \ref{prblems}, we can compute
\begin{equation}\label{pi^*}
 \xi^{(l_1,l_2,\ldots, l_n)}=r\pi^*(\xi^{(l_1,l_2,\ldots, l_n)})=r(\bar{\xi_1}^{l_1}\bar{\xi_2}^{l_2}\cdots \bar{\xi_n}^{l_n})
 =r((S_{2^0})^{l_1} (S_{2^1, 2^0})^{l_2}\cdots (S_{2^{n-1}, 2^{n-2}, \ldots, 2^0})^{l_n})
\end{equation}
and it reduces to computing admissible sequences occurring in the overlapping shuffle product.

For (\ref{prob:1}) of  Problem  \ref{prblems}, by Corollary \ref{A-triangularity} we have
\[
 \pi^*(\xi^{\gamma(J)})=\pi^*(\Sq_J + \text{(terms higher than $\Sq_J$)})
\]
and the left-hand side can be computed by the overlapping shuffle product.
Thus, we can compute inductively the coefficients $C^I_J$ in
\[
 \pi^*(\Sq_J) = \sum_{I} C^I_J S_I.
\]
We implemented the algorithm into a Maple code \cite{code}.

\begin{example}
We demonstrate the above algorithm in low degrees.
First, compute $\pi^*$-image of monomials $\xi^L$:
\begin{align*}
\pi^*(\xi_2^2) & = S_{2,1} S_{2,1}=S_{4,2}\\ 
\pi^*(\xi_1^3\xi_2) &=(S_3+S_{1,2}+S_{2,1})S_{2,1}\\
& =
S_{5, 1}+S_{4, 2}+S_{3, 3}+S_{2, 4}+S_{2, 3, 1}+S_{1, 4, 1}
+S_{3, 1, 2}+S_{2, 2, 2}+S_{1, 2, 3}+S_{2, 1, 2, 1}+S_{1, 2, 1, 2} \\
\pi^*(\xi_1^6) &= S_{6}+S_{4,2}+S_{2,4}.
\end{align*}
Taking $r$ on the both sides of equations, we obtain
\[
\xi_2^2 =\Sq_{4,2},\ \xi_1^3\xi_2 =\Sq_{5,1}+\Sq_{4,2}, \ \xi_1^6=\Sq_6+\Sq_{4,2}.
\]
Again taking $\pi^*$ on the both sides of the equations, we obtain
\begin{align*}
\pi^*(\Sq_{4,2}) &= S_{4,2} \\
\pi^*(\Sq_{5,1}+\Sq_{4,2}) &=
S_{5, 1}+S_{4, 2}+S_{3, 3}+S_{2, 4}+S_{2, 3, 1}+S_{1, 4, 1}
+S_{3, 1, 2}+S_{2, 2, 2}+S_{1, 2, 3}+S_{2, 1, 2, 1}+S_{1, 2, 1, 2} \\
\pi^*(\Sq_6+\Sq_{4,2}) &= S_{6}+S_{4,2}+S_{2,4}.
\end{align*}
Finally, by using the upper-triangularity, we obtain
\begin{align*}
\pi^*(\Sq_{4,2}) &= S_{4,2}    \\
\pi^*(\Sq_{5,1}) &=
S_{5, 1}+S_{3, 3}+S_{2, 4}+S_{2, 3, 1}+S_{1, 4, 1}+S_{3, 1, 2}+S_{2, 2, 2}+S_{1, 2, 3}+S_{2, 1, 2, 1}+S_{1, 2, 1, 2}  \\ 
\pi^*(\Sq_{6}) &= S_6+S_{2,4}.
\end{align*}
\end{example}

\section{Formula for the conjugation}\label{conjugation}
Any connected commutative or co-commutative  Hopf  algebra has a unique conjugation $\chi$ satisfying
\[
 \chi(1)=1, \ \chi(xy)=\chi(y)\chi(x), \ \chi^2(x)=x, \ \sum x'\chi(x'')=0,
\]
where $\Delta(x)=\sum x'\otimes x''$ and $\deg(x)>0$ (\cite{MM}).
The conjugation invariants in $\A^*$ is studied in \cite{CW}
because it is relevant to the commutativity of ring spectra \cite[Lecture~3]{Adams}.
The same problem in $\LL^*$ has been also studied in \cite{NS,CT}.
Here we investigate them through our point of view.

Since $\pi^\ast$ is a Hopf algebra homomorphism,
we have $\pi^*\circ \chi_{\A^\ast} = \chi_{\LL^*}\circ \pi^*$,
where $\chi_{\A^\ast}$ and $\chi_{\LL^*}$ denote the conjugation operations in $\A^\ast$ and  $\LL^*$ respectively.
For the module basis $S_I$ in $\LL^*$, the conjugation $\chi_{\LL^*}$ is calculated combinatorially.
\begin{definition}
The coarsening set $C(I)$ of a sequence $I=(i_1,\ldots,i_l)$
is defined recursively as
\[
 C(I) := \{ (i_1, I'), (i_1+i'_1,I'_2) \mid I'\in C((i_2,\ldots,i_l)) \} \text{ and } C((i))=\{ (i) \},
\]
where $I'_2$ is the tail partial sequence $(i'_2,\ldots,i'_{l'})$
of $I'=(i'_1,i'_2,\ldots,i'_{l'})$.
\end{definition}
\begin{example}
$C((a,b,c))=\{ (a,b,c), (a+b,c), (a,b+c), (a+b+c) \}$.
\end{example}

A formula for the conjugation operation in the dual Leibniz--Hopf algebra is given by Ehrenborg \cite[Proposition~3.4]{E}.
 We now give a simple proof for its mod $2$ reduction.
\begin{proposition}\label{chi}
\[
 \chi_{\LL^*}(S_I)= \sum_{I'\in C(I^{-1})} S_{I'},
\]
where $I^{-1}=(i_l,\ldots,i_1)$ is the reverse sequence of $I=(i_1,\ldots,i_l)$.
\end{proposition}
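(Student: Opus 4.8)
The plan is to prove the conjugation formula by induction on the length $l$ of the sequence $I$, exploiting the defining properties of the antipode $\chi$ together with the explicit co-product \eqref{co-product} and the overlapping shuffle product on $\LL^*$. The base case $l=1$ is immediate: for $I=(i)$ we have $C(I^{-1})=C((i))=\{(i)\}$, and since $S_{(i)}$ is primitive (its co-product is $S_{(i)}\otimes 1 + 1\otimes S_{(i)}$), the relation $\sum x'\chi(x'')=0$ forces $\chi_{\LL^*}(S_{(i)})=S_{(i)}$, matching the right-hand side.

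For the inductive step I would use the recursion $\sum x'\chi(x'')=0$ for $x=S_I$ with $I=(i_1,\ldots,i_l)$ of positive degree. Using the co-product \eqref{co-product}, this reads
\[
 \chi_{\LL^*}(S_I) = -S_I - \sum_{k=1}^{l-1} S_{(i_1,\ldots,i_k)}\,\chi_{\LL^*}(S_{(i_{k+1},\ldots,i_l)}),
\]
and over $\F$ the signs disappear. The tail sequences $(i_{k+1},\ldots,i_l)$ are strictly shorter, so the induction hypothesis applies and rewrites each $\chi_{\LL^*}(S_{(i_{k+1},\ldots,i_l)})$ as a sum over the coarsening set of the reversed tail. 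The products $S_{(i_1,\ldots,i_k)}\cdot S_{I'}$ are then expanded via the overlapping shuffle product, and the whole right-hand side must be shown to collapse to $\sum_{I'\in C(I^{-1})} S_{I'}$.

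The main obstacle, and the heart of the argument, will be this combinatorial collapse: verifying that after substituting the inductive expressions and expanding all overlapping shuffle products, every term $S_{I'}$ with $I'\notin C(I^{-1})$ appears an \emph{even} number of times (so cancels mod $2$), while each $I'\in C(I^{-1})$ survives with coefficient $1$. The cleanest route I would take is to reinterpret both sides combinatorially. The coarsening set $C(I^{-1})$ consists exactly of the sequences obtained from the reverse $I^{-1}=(i_l,\ldots,i_1)$ by summing together some blocks of \emph{consecutive} entries; these are indexed by compositions of $l$, i.e.\ by choices of where to place ``bars'' among the $l$ reversed entries. I would set up a bijection between the surviving monomials on the recursive right-hand side and these block decompositions, and show that all other contributions arise in $\tau$-paired form (in the sense of Lemma \ref{eqlength}, where the flip $\tau$ interchanges the two arguments of the shuffle) and hence cancel in characteristic $2$.

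An alternative, and possibly smoother, strategy is to verify the formula by pairing against the basis $S^K$ of $\LL$ and using the defining adjunction $\langle \chi_{\LL^*}(S_I), S^K\rangle = \langle S_I, \chi_{\LL}(S^K)\rangle$, together with a known description of $\chi_{\LL}$ on the generators $S^K$. One would then match the coarsening-set description directly against the combinatorics of $\chi$ on the Leibniz--Hopf algebra side. Whichever route is chosen, the crux is the same: translating the recursively defined coarsening operation into the shuffle-product bookkeeping and isolating the odd-multiplicity terms. I expect the block/bar bijection to be the decisive computation, and I would organise the write-up so that the mod $2$ cancellation of all non-coarsening terms is stated as the single key combinatorial identity to be checked.
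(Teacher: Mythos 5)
Your overall strategy is sound and close to the paper's: both arguments reduce to checking the antipode recursion $\sum_{k} S_{(i_1,\ldots,i_k)}\chi(S_{(i_{k+1},\ldots,i_l)})=0$ against the candidate formula $\chi'(S_I)=\sum_{I'\in C(I^{-1})}S_{I'}$, and both correctly identify that the whole content lies in a combinatorial cancellation among the overlapping shuffle products $S_{(i_1,\ldots,i_k)}\cdot S_{I'}$. However, that cancellation is exactly the step you leave unproved (``must be shown to collapse'', ``I would set up a bijection'', ``I expect the block/bar bijection to be the decisive computation''), so as written the proposal is a plan rather than a proof. Worse, the mechanism you gesture at --- pairing the unwanted terms via the flip $\tau$ of Lemma~\ref{eqlength} --- is not the one that actually operates here. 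Lemma~\ref{eqlength} pairs terms \emph{within} a single product of two equal-length sequences, whereas the cancellation in the antipode identity is a \emph{telescoping between consecutive values of $k$}. Already for $I=(a,b)$ one sees this: $S_{a,b}$ (the $k=l$ term) cancels against the ``$a$-before-$b$'' term of $S_a\cdot S_b$, and the remaining terms $S_{b,a}+S_{a+b}$ of $S_a\cdot S_b$ cancel against $\chi'(S_{a,b})$; no $\tau$-pair inside $S_a\cdot S_b$ is ever matched with itself.

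The paper makes this telescoping precise by introducing $C_k(I)=\sum_{I'\in C((I_{k+1})^{-1})} I'\cdot(i_1,\ldots,i_k)$ and showing $C_1(I)=C(I^{-1})\sqcup C'_1(I)$, where $C'_1(I)$ consists of the shuffles in which $i_1$ lands to the left of $i_2$, then $C_2(I)=C'_1(I)\sqcup C'_2(I)$, and so on; summing over $k$ gives $\sum_k C_k(I)=C(I^{-1})$ mod $2$, which is the identity you need. If you want to complete your write-up, this filtration (or an equivalent explicit involution pairing a term of $S_{(i_1,\ldots,i_k)}\cdot\chi'(S_{I_{k+1}})$ with a term of $S_{(i_1,\ldots,i_{k\pm1})}\cdot\chi'(S_{I_{k\pm1+1}})$) is what has to replace the appeal to Lemma~\ref{eqlength}. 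One further point you should address: the counting must be done \emph{symbolically} in the letters $i_1,\ldots,i_l$, since for repeated values distinct symbolic terms can coincide and cancel prematurely in $\WW$ (the paper flags this in a footnote). Your alternative route via $\langle\chi_{\LL^*}(S_I),S^K\rangle=\langle S_I,\chi_{\LL}(S^K)\rangle$ presupposes a known formula for $\chi_{\LL}$, which in this paper is only derived \emph{afterwards} (Theorem~\ref{duality}) from the present proposition, so it would be circular here unless you prove the $\LL$-side formula independently.
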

\begin{proof}
The conjugation is uniquely characterised by
\[
 \chi_{\LL^*}(1)=1, \sum x'\chi_{\LL^*}(x'')=0,
\]
where $\Delta(x)=\sum x'\otimes x''$ and $\deg(x)>0$.
We put $\chi'(S_I)=\sum_{I'\in C(I^{-1})} S_{I'}$ and show that it satisfies the above equations.
It is obvious that $\chi'(1)=1$.
Since the co-product is given in \eqref{co-product},
the second equation reads
\[
\sum_{k=0}^{l} S_{i_1,\ldots,i_k} \chi'(S_{i_{k+1},\ldots,i_n})=0 \qquad (\forall I=(i_1,i_2,\ldots,i_n)).
\]
We regard an element of $\WW$ with a finite subset of $\W$ in the obvious way.
 We investigate relation between coarsening and the overlapping shuffle product.
Define
\[
 C_k(I)
=\sum_{  I' \in C((I_{k+1})^{-1})} I' \cdot (i_1, \ldots, i_k).
\]
We observe\footnote{Here, 
we deal with sequences symbolically so that we avoid cancellations 
like $(i_3+i_2,i_1)+(i_3+i_1,i_2)=0$ when $i_1=i_2$.}
that $C(I^{-1}) \subset C_1(I)$
and $C'_1(I):=C_1(I) \setminus C(I^{-1})$ consists of those sequences that $i_1$ appears to the left of $i_2$.
In turn, $C'_1(I) \subset C_2(I)$ and $C'_2(I):=C_2(I) \setminus C'_1(I)$ consists of those 
sequences that $i_2$ appears to the left of $i_3$.
Continuing similarly, we obtain
\[
 C(I^{-1}) 
=\sum_{k=1}^l C_k(I).
\] 
 It follows that
\[
\chi'(S_I)=\sum_{k=1}^l \sum_{I' \in C((I_{k+1})^{-1})}
S_{(i_1, \ldots, i_k)}\cdot S_{I'}
=\sum_{k=0}^{l} S_{i_1,\ldots,i_k} \chi'(S_{i_{k+1},\ldots,i_n})-\chi'(S_{I})
\]
and $\sum_{k=0}^{l} S_{i_1,\ldots,i_k} \chi'(S_{i_{k+1},\ldots,i_n})=0$.
\end{proof}

We give another formula for $\chi_{\LL^*}(S_I)$.
\begin{definition}
For a sequence $a_1,a_2,\ldots,a_n$,
the {\em set of ordered block partitions} $\P(a_1,a_2,\ldots,a_n)$
consists of elements of the form
\[
 \beta=((a_1,a_2,\ldots,a_{i_1}) | (a_{i_1+1},\ldots,a_{i_2}) | \ldots |
 (a_{i_{l-1}+1},\ldots,a_{i_l})),
\]
where $1\le i_1<i_2<\cdots<i_l=n$. We denote $l(\beta)=l$ and
$\beta(k)=(a_{i_{k-1}+1},\ldots,a_{i_k})$.
Or inductively, we can define
\begin{equation}\label{ordered-partition}
\P(a_1,a_2,\ldots,a_n) = \bigcup_{k=1}^n 
\bigg\{ ((a_1,\ldots,a_k)| \beta) \bigg\rvert \beta\in \P(a_{k+1},a_{k+2},\ldots,a_n) \bigg\}.
\end{equation}
\end{definition}

\begin{theorem}\label{conjform}
\[
\chi_{\LL^*}(S_I)=\sum_{\beta\in \P(I)}
 \prod_{k=1}^{l(\beta)} S_{\beta(k)}.
\]
\end{theorem}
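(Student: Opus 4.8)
The plan is to prove Theorem \ref{conjform} by showing that its right-hand side, which I will denote
\[
\chi''(S_I):=\sum_{\beta\in \P(I)}\prod_{k=1}^{l(\beta)} S_{\beta(k)},
\]
satisfies the two axioms that \emph{uniquely} characterise the conjugation on the connected Hopf algebra $\LL^*$, namely $\chi''(1)=1$ together with $\sum (S_I)'\,\chi''((S_I)'')=0$ for $\deg(S_I)>0$. By the uniqueness of the antipode (\cite{MM}) this identifies $\chi''$ with $\chi_{\LL^*}$, so no comparison with the explicit formula of Proposition \ref{chi} is needed. The normalisation $\chi''(1)=1$ is immediate once we adopt the convention that $\P$ of the empty sequence is the single empty block partition, whose empty product is $1$.

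The key step is to convert the inductive description \eqref{ordered-partition} of the ordered block partitions into a recursion for $\chi''$. I would split a block partition $\beta\in\P(i_1,\ldots,i_n)$ according to its first block $(i_1,\ldots,i_k)$; since the overlapping shuffle product is associative and commutative, the product $\prod_{k}S_{\beta(k)}$ factors as $S_{i_1,\ldots,i_k}\cdot\prod_{j\geq 2}S_{\beta(j)}$, and as $\beta$ ranges over all partitions with this fixed first block, the tail $(\beta(2)|\cdots|\beta(l(\beta)))$ ranges over all of $\P(i_{k+1},\ldots,i_n)$, exactly as dictated by \eqref{ordered-partition}. Summing over the fixed first block and then over $k$ yields
\[
\chi''(S_I)=\sum_{k=1}^{n} S_{i_1,\ldots,i_k}\cdot\chi''(S_{i_{k+1},\ldots,i_n}),
\]
in which the $k=n$ summand equals $S_I\cdot\chi''(1)=S_I$.

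It then remains to recognise this as the defining recursion of $\chi_{\LL^*}$. Feeding the coproduct \eqref{co-product} into the antipode relation $\sum (S_I)'\chi((S_I)'')=0$ and using $\chi(1)=1$ gives, over $\F$ where signs are irrelevant, precisely $\chi(S_I)=S_I+\sum_{k=1}^{n-1}S_{i_1,\ldots,i_k}\,\chi(S_{i_{k+1},\ldots,i_n})$, which coincides with the displayed recursion for $\chi''$. An induction on the length (equivalently the degree) of $I$, with base case $\chi''(1)=\chi_{\LL^*}(1)=1$, then finishes the argument. I expect the only point demanding care to be the factorisation step: one must invoke associativity of the shuffle product to peel off the first block and correctly isolate the boundary summand $k=n$. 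All cancellations—for instance those already visible in the length-two computation $\chi''(S_{a,b})=S_{b,a}+S_{a+b}$, where the two partitions both contribute $S_{a,b}$—are handled automatically, because the recursion is an identity of elements of $\LL^*$ rather than of individual basis monomials.
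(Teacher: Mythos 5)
Your proposal is correct and follows essentially the same route as the paper: both define the candidate map from the block-partition sum, use the recursion \eqref{ordered-partition} to peel off the first block and obtain $\chi''(S_I)=\sum_{k=1}^{n} S_{i_1,\ldots,i_k}\cdot\chi''(S_{i_{k+1},\ldots,i_n})$, and then invoke the uniqueness of the antipode (the paper checks the axiom directly via $2\chi'=0$ over $\F$, while you rearrange it into the defining recursion and induct on length --- a cosmetic difference).
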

\begin{proof}
Let $I=(a_1,a_2,\ldots,a_n)$. Put
\[
\chi'(S_{a_1,a_2,\ldots,a_n})=
\sum_{\beta\in \P(a_1,a_2,\ldots,a_n)}
 \prod_{k=1}^{l(\beta)} S_{\beta(k)}
\]
and we check that
\[
 \chi'(1)=1,
 \sum_{k=0}^{n} S_{a_1,\ldots,a_k} \chi'(S_{a_{k+1},\ldots,a_n})=0.
\]
Then, by the uniqueness of the conjugation, we have $\chi_{\LL^*}=\chi'$.
The first assertion is trivial. For the second,
observe that by (\ref{ordered-partition})
\[
 \chi'(S_{a_1,a_2,\ldots,a_n})=
 \sum_{k=1}^n S_{a_1,\ldots,a_k}
 \left(\sum_{\beta\in \P(a_{k+1},a_{k+2},\ldots,a_n)}
 \prod_{j=1}^{l(\beta)} S_{\beta(j)} \right)
 =\sum_{k=1}^n S_{a_1,\ldots,a_k} \chi'(S_{a_{k+1},\ldots,a_n}).
\]
Hence, we have
\begin{align*}
 \sum_{k=0}^{n} S_{a_1,\ldots,a_k} \chi'(S_{a_{k+1},\ldots,a_n}) &=
 \chi'(S_{a_1,a_2,\ldots,a_n})+\sum_{k=1}^{n} S_{a_1,\ldots,a_k}  \chi'(S_{a_{k+1},\ldots,a_n}) \\
 & = 2\chi'(S_{a_1,a_2,\ldots,a_n}) \\
 &=0.
\end{align*}

\end{proof}

\begin{example}
\begin{align*}
\chi_{\LL^*}(S_{1,2,3}) &= S_{3,2,1}+S_{5,1}+S_{3,3}+S_{6} \\
   &= S_{1,2,3}+S_1 S_{2,3} +S_{1,2}S_3 + S_1 S_2 S_3.
\end{align*}
The first line is computed by Proposition \ref{chi}, and 
the second by Theorem \ref{conjform}.
\end{example}

Theorem \ref{conjform} can be thought of as a generalisation of
Milnor's conjugation formula in $\A^\ast$.
To see this, we first show a small lemma:
\begin{lemma}\label{pi-image-of-xi}
\[
 \bar{\xi}_n^{2^m}=(S_{2^{n-1}, 2^{n-2}, \ldots, 2^0})^{2^m}=S_{2^{n+m-1}, 2^{n+m-2}, \ldots, 2^m}.
\]
\end{lemma}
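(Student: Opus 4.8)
The plan is to recognise this as an immediate translation of Corollary \ref{osp-power} into the algebra $\LL^*$. The first equality is merely the definition $\bar{\xi}_n = S_{2^{n-1}, 2^{n-2}, \ldots, 2^0}$ fixed just before Lemma \ref{triangularity}, so all the content lies in the second equality, and the whole task is to match the multiplicative power in $\LL^*$ with an overlapping shuffle power of sequences.

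First I would recall that the product on $\LL^*$ is dual to the coproduct \eqref{coproduct} and is therefore given on basis elements by $S_I S_J = \sum_{K\in I\cdot J} S_K$, where $I\cdot J$ denotes the overlapping shuffle product computed in $\WW$. Consequently, raising a single basis element to a power in $\LL^*$ corresponds precisely to forming the overlapping shuffle power of the underlying sequence. Writing $A=(2^{n-1}, 2^{n-2}, \ldots, 2^0)$, this gives $\bar{\xi}_n^{2^m} = S_A^{2^m} = \sum_{K\in A^{2^m}} S_K$, where $A^{2^m}$ is the $2^m$-th overlapping shuffle power in $\WW$.

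The key step is then to apply Corollary \ref{osp-power} directly to $A$, which yields $A^{2^m} = (2^m\cdot 2^{n-1}, 2^m\cdot 2^{n-2}, \ldots, 2^m\cdot 2^0)$ as a single sequence; here the $a$-first terms of Lemma \ref{eqlength} and their $\tau$-images cancel in pairs over $\F$, since $\tau$ is the identity on an equal-length square. Simplifying the exponents via $2^m\cdot 2^k = 2^{m+k}$ gives $A^{2^m} = (2^{n+m-1}, 2^{n+m-2}, \ldots, 2^m)$, whence $\bar{\xi}_n^{2^m} = S_{2^{n+m-1}, 2^{n+m-2}, \ldots, 2^m}$, as claimed.

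I do not expect any genuine obstacle, since the statement is essentially Corollary \ref{osp-power} restated in $\LL^*$. The only point deserving care is the bookkeeping that identifies the algebra power $S_A^{2^m}$ in $\LL^*$ with the shuffle power $A^{2^m}$ in $\WW$; once this dictionary is made explicit through the product formula $S_I S_J = \sum_{K\in I\cdot J} S_K$, the conclusion follows at once.
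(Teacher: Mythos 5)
Your proposal is correct and follows exactly the paper's route: the paper's proof is the one-line statement that the lemma is ``a direct consequence of Corollary \ref{osp-power} combined with Lemma \ref{pi-image}'', and your argument simply spells out that combination (the identification $\bar{\xi}_n=S_{2^{n-1},\ldots,2^0}$, the dictionary $S_IS_J=\sum_{K\in I\cdot J}S_K$, and the shuffle-power computation). No differences of substance.
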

\begin{proof}
This is a direct consequence of Corollary \ref{osp-power}
 combined with Lemma \ref{pi-image}.
\end{proof}
\begin{corollary}[{\cite[Lemma 10]{Milnor}}]
\begin{equation}\label{milnor-conj}
\chi_{\A^\ast}(\xi_n)= \sum_\alpha \prod_{k=1}^{l(\alpha)} \xi_{\alpha(k)}^{2^{\sigma(k)}},
\end{equation}
where $\alpha=(\alpha(1)|\alpha(2)|\ldots|\alpha(l(\alpha))$ runs through all the compositions of the integer $n$ and
$\sigma(k)=\sum_{j=1}^{k-1} \alpha(j)$.
\end{corollary}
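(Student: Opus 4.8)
The plan is to apply the injective Hopf map $\pi^\ast$ to both sides of \eqref{milnor-conj} and verify the resulting identity in $\LL^\ast$; since $\pi^\ast$ is injective this is equivalent to the asserted formula in $\A^\ast$. For the left-hand side I would use the compatibility $\pi^\ast\circ\chi_{\A^\ast}=\chi_{\LL^\ast}\circ\pi^\ast$ together with $\pi^\ast(\xi_n)=\bar{\xi}_n=S_{2^{n-1},\ldots,2^0}$, giving
\[
 \pi^\ast(\chi_{\A^\ast}(\xi_n))=\chi_{\LL^\ast}(S_{2^{n-1},2^{n-2},\ldots,2^0}).
\]
Theorem \ref{conjform} then expands this as $\sum_{\beta\in\P(2^{n-1},\ldots,2^0)}\prod_{k}S_{\beta(k)}$, a sum over all ordered block partitions of the descending sequence $(2^{n-1},\ldots,2^0)$.

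For the right-hand side I would push $\pi^\ast$ through the product and the powers, it being a ring homomorphism, to obtain $\sum_\alpha\prod_k \bar{\xi}_{\alpha(k)}^{2^{\sigma(k)}}$, and then rewrite each factor by Lemma \ref{pi-image-of-xi}:
\[
 \bar{\xi}_{\alpha(k)}^{2^{\sigma(k)}}=S_{2^{\alpha(k)+\sigma(k)-1},\ldots,2^{\sigma(k)}}=S_{2^{\sigma(k+1)-1},\ldots,2^{\sigma(k)}},
\]
using $\sigma(k+1)=\sigma(k)+\alpha(k)$. The key observation is that this factor is precisely the consecutive sub-block of $(2^{n-1},\ldots,2^0)$ occupying the exponent range $[\sigma(k),\sigma(k+1)-1]$, and as $k$ runs from $1$ to $l(\alpha)$ these ranges tile $\{0,1,\ldots,n-1\}$.

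The combinatorial heart, and the step I expect to demand the most care, is the bijection between compositions $\alpha$ of $n$ and ordered block partitions $\beta\in\P(2^{n-1},\ldots,2^0)$. Reading the sub-blocks of $(2^{n-1},\ldots,2^0)$ from left to right amounts to reading the exponent ranges from high to low, i.e.\ in the order $k=l(\alpha),l(\alpha)-1,\ldots,1$; hence the block lengths of $\beta$ are those of the reversed composition $(\alpha(l(\alpha)),\ldots,\alpha(1))$. Since reversal is a bijection on the compositions of $n$, both sides are indexed by the same set $\P(2^{n-1},\ldots,2^0)$, and under this matching the two products consist of exactly the same factors $S_{\beta(k)}$. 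Because the overlapping shuffle product is commutative (Lemma \ref{eqlength}), the order of the factors is irrelevant, so the summands coincide term by term and the two expansions agree. This proves the identity after applying $\pi^\ast$, and hence \eqref{milnor-conj} by the injectivity of $\pi^\ast$.
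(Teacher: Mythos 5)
Your proposal is correct and follows essentially the same route as the paper's own proof: apply the injective map $\pi^\ast$, use $\pi^\ast\circ\chi_{\A^\ast}=\chi_{\LL^\ast}\circ\pi^\ast$ and Lemma \ref{pi-image-of-xi}, and match compositions of $n$ with ordered block partitions of $(2^{n-1},\ldots,2^0)$ via Theorem \ref{conjform}. Your explicit remarks about the reversal bijection and the use of commutativity to reorder the factors only make precise what the paper leaves implicit.
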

\begin{proof}
We apply the injection $\pi^*$ to the both sides of \eqref{milnor-conj}
and show that they coincide.
For the left-hand side, by Lemma \ref{pi-image-of-xi} we have
\[
\pi^*(\chi_{\A^\ast}(\xi_n))=\chi_{\LL^*}(\pi^*(\xi_n))=
\chi_{\LL^*}(S_{2^{n-1},2^{n-2},\ldots,2^0}).
\]
Since $\pi^*(\xi_{\alpha(k)}^{2^{\sigma(k)}})=S_{2^{\alpha(k)+\sigma(k)-1},\ldots,2^{\sigma(k)}}$
 by Lemma \ref{pi-image-of-xi}, we see
\[
\pi^* \left(\prod_{k=1}^{l(\alpha)} \xi_{\alpha(k)}^{2^{\sigma(k)}} \right)=
 S_{2^{n-1},\ldots,2^{n-\alpha(l(\alpha))}} \cdot
 S_{2^{n-1-\alpha(l(\alpha))},\ldots,2^{n-\alpha(l(\alpha))-\alpha(l(\alpha)-1)}}
 \cdots
 S_{2^{\alpha(1)-1},\ldots,2^0}.
\]
So when $\alpha$ ranges over all compositions of $n$,
we get all the ordered block partitions of the sequence $2^{n-1},2^{n-2},\ldots,2^0$.
The assertion follows from Theorem \ref{conjform}.
\end{proof}

%%%
\section{Duality between $\LL$ and $\LL^\ast$}\label{sec:duality}
In the previous section we discussed how to compute the conjugation in $\LL^\ast$.
Here, we relate the conjugation in $\LL$ with that in $\LL^\ast$
by using a self-duality of $\W$.
Denote $I \preceq I'$ if $I\in C(I')$.
We think of $I\in \W $ as a string of $1$'s separated by `$+$` and commas; 
$(\underbrace{1+1+\cdots+1}_{i_1}, \underbrace{1+1+\cdots+1}_{i_2},\ldots,\underbrace{1+1+\cdots+1}_{i_l})$.
\begin{definition}
We define the dual $\bar{I}\in \W $ of $I$ by switching $+$ and the commas.
\end{definition}
\begin{example}
For $I=(1,3,2)=(1,1+1+1,1+1)$, its dual is
\[ \bar{I}=(1+1,1,1+1,1)=(2,1,2,1). \]
\end{example}
It is easily seen that $\bar{\bar{I}}=I$ and $I \preceq I' \Leftrightarrow \bar{I} \succeq \bar{I}'$.
Extend the duality to one between $\LL$ and $\LL^*$ by
\[
 D(S^I) = S_{\bar{I}}, \quad D^{-1}(S_I) = S^{\bar{I}}.
\]

\begin{theorem}\label{duality}
We have $D\circ \chi_{\LL}= \chi_{\LL^\ast}\circ D$. In particular,
$f \in \LL$ is a conjugation invariant if and only if so is $\bar{f} \in \LL^*$.
\end{theorem}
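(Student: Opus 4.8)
The plan is to verify the identity $D\circ\chi_\LL=\chi_{\LL^*}\circ D$ on the module basis $\{S^I\}$, since $D$, $\chi_\LL$ and $\chi_{\LL^*}$ are all $\F$-linear. The formula for $\chi_{\LL^*}$ is already in hand from Proposition~\ref{chi}, so the first task is to obtain a matching combinatorial description of $\chi_\LL$ on $\LL$, and then to translate between the two via the involution $\bar{\cdot}$.

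First I would compute $\chi_\LL$ on a single generator. Feeding $S^n$ into the antipode axiom $\sum x'\chi(x'')=0$ together with the coproduct \eqref{coproduct} yields the recursion $\chi_\LL(S^n)=\sum_{i=1}^n S^i\,\chi_\LL(S^{n-i})$ (signs disappear over $\F$), which unrolls to $\chi_\LL(S^n)=\sum_c S^c$, summed over all compositions $c$ of $n$. Since $\chi_\LL$ is an algebra anti-homomorphism and the product of $\LL$ is concatenation, I would then expand $\chi_\LL(S^I)=\chi_\LL(S^{i_n})\cdots\chi_\LL(S^{i_1})$ and observe that the resulting monomials are precisely the concatenations of a composition of $i_n$, a composition of $i_{n-1}$, and so on; these are exactly the sequences refining $I^{-1}=(i_n,\ldots,i_1)$, each occurring once. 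In the notation of \S\ref{sec:duality} this reads
\[
 \chi_\LL(S^I)=\sum_{J:\, I^{-1}\preceq J} S^J,
\]
the sum ranging over $J$ with $I^{-1}\in C(J)$.

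Next I would match the two sides. Applying $D$ gives $D(\chi_\LL(S^I))=\sum_{J:\,I^{-1}\preceq J}S_{\bar J}$, whereas Proposition~\ref{chi} gives $\chi_{\LL^*}(D(S^I))=\chi_{\LL^*}(S_{\bar I})=\sum_{K:\,K\preceq(\bar I)^{-1}}S_K$. I would reindex the first sum by $K=\bar J$, a bijection because $\bar{\bar J}=J$, and invoke the order-reversing property $I^{-1}\preceq J\Leftrightarrow \bar J\preceq\overline{I^{-1}}$ recorded in \S\ref{sec:duality}; this rewrites the indexing condition as $K\preceq\overline{I^{-1}}$. The two sums then coincide exactly when $\overline{I^{-1}}=(\bar I)^{-1}$, i.e.\ when dualisation commutes with reversal.

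The hard part is this last combinatorial identity, but it becomes transparent in the separator-string picture behind the definition of $\bar{\cdot}$: writing $I$ as a string of $1$'s with internal separators ($+$ or comma), reversing the sequence reverses the separator string, dualisation swaps the two symbols, and these two operations on a finite string plainly commute, so $\overline{I^{-1}}=(\bar I)^{-1}$. This settles the identity on basis elements and hence $D\circ\chi_\LL=\chi_{\LL^*}\circ D$. For the concluding assertion, writing $\bar f:=D(f)$ and using that $D$ is a bijection, we get $\chi_\LL(f)=f$ if and only if $\chi_{\LL^*}(\bar f)=D(\chi_\LL(f))=\bar f$, so $f\in\LL$ is a conjugation invariant precisely when $\bar f\in\LL^*$ is.
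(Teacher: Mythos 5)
Your proof is correct, and its overall architecture matches the paper's: both reduce to a basis computation, both use Proposition~\ref{chi} for $\chi_{\LL^*}$, both exploit the order-reversing property of $I\mapsto\bar I$, and both hinge on the same explicit formula $\chi_\LL(S^I)=\sum_{J\succeq I^{-1}}S^J$. The one genuine difference is how that formula is established. The paper runs the computation in the opposite direction: it evaluates $D^{-1}\circ\chi_{\LL^*}\circ D(S^I)$, arrives at $\sum_{I'\succeq I^{-1}}S^{I'}$, and then identifies this with $\chi_\LL$ by the uniqueness of the antipode, leaving the verification of $\sum x'\chi'(x'')=0$ to the reader (``as in Proposition~\ref{chi}''). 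You instead derive the formula for $\chi_\LL$ constructively, by unrolling the recursion $\chi_\LL(S^n)=\sum_{i=1}^n S^i\chi_\LL(S^{n-i})$ and using anti-multiplicativity together with the fact that the product in $\LL$ is concatenation, so that refinements of $I^{-1}$ appear each exactly once. This buys you a self-contained argument that does not lean on the uniqueness clause, and it surfaces explicitly the combinatorial identity $\overline{I^{-1}}=(\bar I)^{-1}$, which the paper uses silently in the step from $\sum_{I'\preceq(\bar I)^{-1}}$ to $\sum_{\bar I'\succeq I^{-1}}$; your separator-string justification of that identity is exactly the right level of detail. The trade-off is that the paper's route is shorter on the page, since it recycles the uniqueness-of-antipode template already used twice before.
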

\begin{proof}
We compute
\begin{align*}
D^{-1}\circ \chi_{\LL^\ast}\circ D ( S^I )
&= D^{-1}  \chi_{\LL^\ast}(S_{\bar{I}}) 
= D^{-1} (\sum_{I' \preceq(\bar{I})^{-1}} S_{I'}) 
= D^{-1} (\sum_{\bar{I}' \succeq I^{-1}} S_{I'}) \\
&= \sum_{\bar{I}'\succeq I^{-1}} S^{\bar{I}'} 
= \sum_{I'\succeq I^{-1}} S^{I'}.
\end{align*}
Put $\chi'(S^I)=\sum_{I'\succeq I^{-1}} S^{I'}$.
Then, one can check $\chi'(1)=1$ and $\sum x' \chi'(x'')=0$ for $\Delta x= \sum x'\otimes x''$
as in Proposition \ref{chi}.
Hence, by the uniqueness of the conjugation, we have $\chi'=\chi_{\LL}$.
\end{proof}
\begin{example}
$f=S^{1,1,2}+S^{2,1,1}+S^{1,1,1,1}$ is a $\chi_{\LL}$-invariant,
whilst $D(f)=S_{3,1}+S_{1,3}+S_4$ is a $\chi_{\LL^*}$-invariant.
\end{example}

\begin{remark}
The sub-module of the conjugation invariants in $\LL$ is $\ker(\chi_{\LL}-1)$ and
that in $\LL^\ast$ is $\ker(\chi_{\LL^*}-1)$.
The conjugations in $\LL$ and $\LL^*$ are dual to each other,
and hence, the linear map $\chi_{\LL}-1$ is
transpose to $\chi_{\LL^*}-1$ with the kernel of same dimension (\cite{NS}).
Theorem \ref{duality} gives more information by specifying
an explicit correspondence between their elements.
\end{remark}

\section*{Acknowledgements}
%\begin{acknowledgements}
We would like to thank Stephen Theriault, Martin Crossley, and Carmen Rovi for their comments on the earlier version of this paper.
%\end{acknowledgements}

\end{document}